\newtheorem{theorem}{Theorem}
\newtheorem{corollary}[theorem]{Corollary}
\newtheorem{lemma}[theorem]{Lemma}
\theoremstyle{remark}
\newtheorem{remark}[theorem]{Remark}
\newcommand{\St}{\mathrm{S}}
\newcommand{\T}{\mathrm{T}}
\newcommand{\N}{\mathbb{N}}
\newcommand{\C}{\mathbb{C}}
\newcommand{\Mix}{\mathrm{Mix}}
\newcommand{\Max}{\mathrm{Max}}
\renewcommand{\epsilon}{\varepsilon}
\renewcommand{\leq}{\leqslant}
\renewcommand{\geq}{\geqslant}
\date{}
\author{Abhinav Chand}
\address[Abhinav Chand]{Department of Mathematics, University of Louisiana at Lafayette, Lafayette, 70506, USA}
\email{abhinav.chand1@louisiana.edu}
\author{Leonel Robert}
\address[Leonel Robert]{Department of Mathematics, University of Louisiana at Lafayette, Lafayette, 70506, USA}
\email{lrobert@louisiana.edu}
\author{Arindam Sutradhar}
\address[Arindam Sutradhar]{Department of Mathematics, University of Louisiana at Lafayette, Lafayette, 70506, USA}
\email{arindam.sutradhar1@louisiana.edu}
\title{Simultaneous averaging to zero by unitary mixing operators}
\begin{document}
\maketitle

\begin{abstract}
We show that if every element a vector subspace of a C*-algebra can be averaged to zero by means 
of unitary mixing operators, then all the elements of the subspace can be simultaneously averaged to zero by a net of 
unitary mixing operators. Moreover, such subspaces admit a simple description in terms of commutators
and  kernels of states on the C*-algebra. We apply this result to  center-valued expectations in C*-algebras with the Dixmier property. 	
\end{abstract}
	
\section{Introduction}

Let $A$ be a unital C*-algebra. Let $U(A)$ denote its unitary group. We call a linear operator $T\colon A\to A$   a unitary mixing (or averaging) operator if it is a convex combination of inner automorphisms of $A$. That is, 
\[
T = \sum^n_{j=1} t_j\mathrm{Ad}_{u_j},
\]
where $u_j\in U(A)$ and $t_j\geq 0$ for $1\leq j\leq n$, $\sum^n_{j=1} t_j=1$, and where  $\mathrm{Ad}_u(x)=uxu^*$ for all $x\in A$. There is a large literature, going back to Dixmier's approximation theorem for von Neumann algebras, around the averaging of elements of  a C*-algebra by means of unitary mixing operators. The question that we investigate here is that of simultaneously averaging a collection of elements of a C*-algebra towards the zero element. 

Let $\St(A)$ denote the states space of $A$. Let $\Max(A)$ denote the maximal (two-sided) ideals space of $A$. We prove the following theorem:
\begin{theorem}\label{zeroaverage}
Let $A$ be a unital C*-algebra. Let $V\subseteq A$ be a linear subspace. 
The following are equivalent

\begin{enumerate}[(i)]
\item	
$V\subseteq \overline{[A,A]}$ and for each maximal ideal $M\in \Max(A)$ there exists $\rho_M\in \St(A)$ that factors through $A/M$ and such that $V\subseteq \ker \rho_M$.

\item
For each $v\in V$ there exists a sequence of unitary mixing operators $(T_n)_{n=1}^\infty$ such that
$T_nv\to 0$. 
\item
There exists a net of unitary mixing operators $(T_\lambda)_\lambda$ such that $T_\lambda v\to 0$ for all  $v\in V$.	
\end{enumerate}
\end{theorem}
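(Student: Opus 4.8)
The plan is to prove the cycle (iii) $\Rightarrow$ (ii) $\Rightarrow$ (i) $\Rightarrow$ (iii). The implication (iii) $\Rightarrow$ (ii) is immediate: given a net $(T_\lambda)$ with $T_\lambda v\to 0$ for all $v$, fix $v$ and pick $\lambda_n$ with $\|T_{\lambda_n}v\|<1/n$ to extract the required sequence. Throughout I use that the vectors $Ta$, as $T$ ranges over unitary mixing operators, are exactly the elements of $\mathrm{conv}\{uau^*:u\in U(A)\}$, so that ``$a$ can be averaged to zero'' means precisely $0\in\Omega(a):=\overline{\mathrm{conv}}\{uau^*:u\in U(A)\}$ (norm closure). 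The two substantive implications are (ii) $\Rightarrow$ (i) and (i) $\Rightarrow$ (iii), which I treat separately.

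For (ii) $\Rightarrow$ (i) the commutator condition is soft: since $uxu^*-x=[u,xu^*]\in[A,A]$, any mixing operator gives $Tv-v\in[A,A]$, so $T_nv\to 0$ yields $v=\lim(v-T_nv)\in\overline{[A,A]}$. For the state condition I fix $M\in\Max(A)$ and pass to the simple unital quotient $B=A/M$ with map $\pi_M$; applying $\pi_M$ to $T_nv\to0$ shows every element of $W:=\pi_M(V)$ is averageable to zero in $B$. Working with the real subspace $W_{\mathrm{sa}}=\{\mathrm{Re}\,w':w'\in W\}$ (a real subspace, since $W$ is complex and $\mathrm{Im}\,w'=\mathrm{Re}(-iw')$), each of its elements is again averageable to zero, hence killed by some state: if a self-adjoint $w$ were killed by no state, its numerical range $\{\sigma(w):\sigma\in\St(B)\}=[\min\mathrm{spec}\,w,\max\mathrm{spec}\,w]$ would avoid $0$, forcing $w\geq\delta 1$ or $w\leq-\delta1$, whence $\Omega(w)$ lies in the same half-line, contradicting $0\in\Omega(w)$. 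To upgrade to one common state I run a joint-numerical-range separation: for finite $F\subseteq W_{\mathrm{sa}}$, if the compact convex set $\{(\sigma(a))_{a\in F}:\sigma\in\St(B)\}\subseteq\R^{F}$ missed the origin, a separating vector $(\lambda_a)$ would give $\sigma(\sum_a\lambda_a a)\geq c>0$ for all states, i.e. $\sum_a\lambda_a a\geq c1\in W_{\mathrm{sa}}$, contradicting that this element is killed by some state. Thus $0$ lies in every such set, and weak*-compactness (finite intersection property of the sets $\{\sigma:\sigma|_F=0\}$) produces $\sigma_M$ with $\sigma_M|_{W_{\mathrm{sa}}}=0$; then $\rho_M=\sigma_M\circ\pi_M$ kills all of $V$.

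For (i) $\Rightarrow$ (iii) it suffices, for each finite $F=\{v_1,\dots,v_k\}\subseteq V$ and each $\epsilon>0$, to find one mixing operator $T$ with $\max_i\|Tv_i\|<\epsilon$; the net is then indexed by the pairs $(F,\epsilon)$. Equivalently I must put $0$ in the norm closure of the convex set $C_F=\{(Tv_1,\dots,Tv_k):T\ \text{mixing}\}\subseteq A^{k}$. Since $C_F$ is convex, Mazur's theorem replaces the norm closure by the weak closure, and the weak topology of $A$ is the restriction of the weak* topology of $A^{**}$; so it is enough to place $0$ in the weak* closed convex hull of $\{(uv_1u^*,\dots,uv_ku^*):u\in U(A)\}$ inside $(A^{**})^{k}$. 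As $U(A)$ is $\sigma$-strong* dense in $U(A^{**})$ and $u\mapsto uv_iu^*$ is $\sigma$-strong* continuous on bounded sets, this hull is unchanged if $u$ ranges over $U(A^{**})$. The problem becomes a \emph{simultaneous} Dixmier-averaging problem in the von Neumann algebra $N=A^{**}$: conjugate the tuple to $0$ weak* by a common unitary. By Dixmier's theorem the reachable central tuples are controlled, on the finite summand of $N$, by the center-valued trace $\tau_Z$, and on the properly infinite summand by central essential bounds; everything reduces to showing these central values vanish.

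Verifying those central conditions is where hypothesis (i) enters and is the main obstacle. On the finite part the requirement is $\tau_Z(v_i)=0$: pairing with normal central states yields normal tracial states of $N$, which restrict to tracial states of $A$; these are norm-continuous and annihilate $[A,A]$, so $V\subseteq\overline{[A,A]}$ forces the pairings, hence $\tau_Z(v_i)$, to vanish. The properly infinite part is delicate: there averaging a self-adjoint element to $0$ requires that it not be centrally bounded away from $0$, and the states $\rho_M$ must be fed in to exclude this. The plan is to match each properly infinite central summand of $A^{**}$ with a maximal ideal $M$ (the associated factor representation factoring through a traceless simple quotient) and to use $\rho_M(v_i)=0$ to witness that $v_i$ is bounded neither below nor above on that summand, placing $0$ in the relevant central interval. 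I expect the hardest bookkeeping to be (a) making this summand-to-ideal correspondence precise and uniform over the whole tuple, and (b) securing genuinely \emph{simultaneous} averaging of $v_1,\dots,v_k$: naive iteration fails because averaging a later $v_j$ disturbs an already-averaged $v_i$, which is exactly why the argument is routed through the bidual, where the center-valued trace and the homogeneity of properly infinite algebras furnish common averaging unitaries. Assembling the resulting operators over $(F,\epsilon)$ into a net and checking $T_\lambda v\to0$ for every $v\in V$ is then routine.
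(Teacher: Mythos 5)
Your handling of (iii)$\Rightarrow$(ii) and of (ii)$\Rightarrow$(i) is sound; the latter is essentially the paper's argument (finite-subset Hahn--Banach separation plus weak* compactness of the states vanishing on $M$), except that you settle the single-element case via the numerical range of a self-adjoint element rather than citing \cite[Theorem 4.7]{NRS}, which works. Your reduction of (i)$\Rightarrow$(iii) to placing $0$ in $\overline{\mathrm{co}}^{w^*}\{u\bm v u^*:u\in U(A^{**})\}$, and the splitting of $A^{**}$ into finite and properly infinite summands with the finite summand handled by the center-valued trace, also match the paper (its Lemma \ref{bidualreduction} and the finite von Neumann algebra step of Theorem \ref{0inDas}).

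The gap is in the properly infinite summand, which is exactly the paper's main contribution, and there your proposal is an acknowledged plan rather than a proof. Two ingredients are missing. First, the mechanism that produces \emph{simultaneous} averaging: the paper's engine is Zsid\'o's approximation lemma (Lemma \ref{zsidolemma}), that in a simple traceless C*-algebra the map $x\mapsto\rho(x)1$ lies in the point-norm closure of $\Mix$; this is what converts ``one state $\rho_M$ kills every coordinate of the tuple'' into ``one mixing operator sends the whole tuple near $0$ at once.'' Your formulation via ``central essential bounds'' and ``$v_i$ bounded neither below nor above'' is a condition on coordinates taken individually, so it can only re-derive (ii), not (iii); appealing to homogeneity of properly infinite algebras does not by itself furnish common averaging unitaries. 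Second, the globalization over the center: after solving the problem in each simple traceless quotient $N/M$ one must lift the resulting central tuples back to $N$ (Lemma \ref{centermapsdensely}, which requires approximating by mixing operators built from exponential unitaries so that they lift), and then patch finitely many local solutions together using a clopen partition of $\widehat{Z(N)}$ and the central convexity of Dixmier sets (Lemma \ref{centrallyconvex}); weak centrality of von Neumann algebras is what parametrizes $\Max(N)$ by $\widehat{Z(N)}$. Finally, your proposed ``summand-to-ideal correspondence'' points the wrong way: after passing to $N=A^{**}$ the relevant maximal ideals are those of $N$, and hypothesis (i) must be transported to them by noting that $A\cap M$ is a proper ideal of $A$, hence contained in some maximal ideal of $A$, and extending the corresponding state by Hahn--Banach to $N/M$; a properly infinite central summand of $A^{**}$ need not factor through any simple quotient of $A$, so matching such summands to maximal ideals of $A$ cannot be made precise.
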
	

It is not difficult to derive the equivalence of (i) and (ii) from \cite[Theorem 4.7]{NRS}. Our main contribution here is  that (i) implies (iii). Observe that in (i), the selection of states $M\mapsto \rho_M$ may be done only
on the maximal ideals $M$ such that $A/M$ has no bounded traces, for if $A/M$ has a trace, then we are guaranteed the 
existence of a $\rho_M$ vanishing on $V$ by the fact that $V\subseteq \overline{[A,A]}$. 

Recall that a C*-algebra is said to have the Dixmier property if for each $a\in A$ there exist unitary mixing operators $(T_n)_{n=1}^\infty$ such that $T_na\to b\in Z(A)$, where $Z(A)$ denotes the center of $A$. The above mentioned approximation theorem of Dixmier states that every von Neumann algebra has the Dixmier property. This is not the case, however, for every C*-algebra (see \cite{ART}). As an immediate corollary of the previous theorem, we obtain the following:  

\begin{corollary}\label{Hmaps}
Let $A$ be a unital C*-algebra with the Dixmier property. Let $H\colon A\to Z(A)$ be a $Z(A)$-linear, positive, unital map such that 
\begin{enumerate}[(a)]
\item
$\tau\circ H =\tau$ for all bounded traces $\tau$ on $A$,
\item
$H(M)\subseteq M\cap Z(A)$ for all $M\in \Max(A)$.
\end{enumerate} 
Then there exists a net of unitary mixing operators  $(T_\lambda)_\lambda$  such that $T_\lambda\to H$  in the point-norm topology.
\end{corollary}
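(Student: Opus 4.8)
The plan is to apply Theorem \ref{zeroaverage} to the linear subspace $V := (I-H)(A) = \{a - H(a) : a\in A\}$, where $I$ is the identity operator on $A$. The reduction rests on the fact that every unitary mixing operator fixes $Z(A)$ pointwise: indeed $\mathrm{Ad}_u(z) = uzu^* = z$ for $z\in Z(A)$, and this property survives convex combinations. Hence for any net $(T_\lambda)$ of unitary mixing operators and any $a\in A$ we have $T_\lambda(a - H(a)) = T_\lambda a - H(a)$, since $H(a)\in Z(A)$. Thus the assertion ``$T_\lambda v\to 0$ for all $v\in V$'' is literally the assertion that $T_\lambda a\to H(a)$ in norm for every $a\in A$, i.e.\ that $T_\lambda\to H$ in the point-norm topology. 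It therefore suffices to verify that $V$ satisfies condition (i) of Theorem \ref{zeroaverage} and then invoke the implication (i) $\Rightarrow$ (iii).

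For the inclusion $V\subseteq\overline{[A,A]}$, I would use hypothesis (a): for every bounded trace $\tau$ on $A$ and every $a\in A$ we have $\tau(a - H(a)) = \tau(a) - \tau(H(a)) = \tau(a)-\tau(a) = 0$, so every bounded trace annihilates $V$. By the Cuntz--Pedersen theorem, $\overline{[A,A]}$ coincides with the intersection of the kernels of all bounded traces on $A$, and hence $V\subseteq\overline{[A,A]}$.

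The heart of the matter is the construction, for each $M\in\Max(A)$, of a state $\rho_M$ that factors through $A/M$ and vanishes on $V$. Here I would exploit that $A/M$ is simple and unital, so $Z(A/M)=\mathbb{C}1$. Since the quotient map $\pi_M\colon A\to A/M$ carries $Z(A)$ into $Z(A/M)=\mathbb{C}1$, its restriction to $Z(A)$ is a character $\chi_M\colon Z(A)\to\mathbb{C}$, determined by $\pi_M(z)=\chi_M(z)1$. I then claim that $\rho_M := \chi_M\circ H$ is the required state. It is a state because $H$ is positive and unital and $\chi_M$ is a character. It factors through $A/M$ because hypothesis (b) gives $H(m)\in M\cap Z(A)$ for $m\in M$, and $\chi_M$ annihilates $M\cap Z(A)$ by its very definition (if $z\in M$ then $\pi_M(z)=0$, so $\chi_M(z)=0$). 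Finally it annihilates $V$: since $H$ is $Z(A)$-linear and unital, $H(z)=zH(1)=z$ for all $z\in Z(A)$, so $H$ is idempotent, whence $H(a-H(a))=H(a)-H(H(a))=0$ and $\rho_M(a-H(a))=\chi_M(0)=0$.

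With both parts of (i) verified, Theorem \ref{zeroaverage} yields a net $(T_\lambda)$ of unitary mixing operators with $T_\lambda v\to 0$ for all $v\in V$, which by the first paragraph gives $T_\lambda\to H$ in the point-norm topology. I expect the main obstacle to be the construction in the third paragraph, namely recognizing that the correct state is the pullback $\chi_M\circ H$ of the central character along $H$, and seeing that hypotheses (a) and (b) are exactly what force the two properties demanded in (i). Note that the idempotency $H\circ H=H$ makes the vanishing on $V$ hold uniformly for every $M$, so this single construction already subsumes the trace-based selection described in the remark following Theorem \ref{zeroaverage}.
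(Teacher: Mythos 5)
Your proposal is correct and follows essentially the same route as the paper: apply Theorem \ref{zeroaverage} to $V=\{a-H(a):a\in A\}$, get $V\subseteq\overline{[A,A]}$ from trace-preservation via Cuntz--Pedersen, and take $\rho_M=\pi_M\circ H$ (your $\chi_M\circ H$) as the state vanishing on $V$ that factors through $A/M$, then use that mixing operators fix the center. Your write-up is in fact slightly more explicit than the paper's in verifying that $\rho_M$ kills $M$ (via hypothesis (b)) and that $H$ is idempotent.
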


This result is possibly well known when $A$ is a von Neumann algebra, although we are not aware of a reference for this case. A related theorem of Magajna for weakly central C*-algebras replaces unitary mixing operators by C*-convex combinations (\cite[Corollary 1.2]{magajna}). The special case of Corollary \ref{Hmaps} when $A$ is simple and traceless is obtained by Zsido in \cite{zsido}. We rely on Zsido's result to prove Theorem \ref{zeroaverage} above.

\section{Preliminaries on mixing operators and Dixmier sets}
Let $A$ be a unital C*-algebra. Fix $n\in \N$. Consider the C*-algebra $A^n$, i.e., the direct sum of $n$ copies of $A$. (The norm in $A^n$ is the maximum norm: $\|\bm a\|=\max \|a_j\|$.) We refer to the elements of $A^n$ as $n$-tuples. We regard $A$ embedded in $A^n$ as the constant $n$-tuples.  

Let us denote by $\Mix(A)$ the set of unitary mixing operators $T\colon A\to A$, as defined in the introduction.
We consider unitary mixing operators in $\Mix(A)$ acting on $n$-tuples coordinatewise: given $T\in \Mix(A)$ and 
$\bm{a} =(a_1,\ldots,a_n)\in A^n$, we write
\[
T(\bm a)=(Ta_1,\ldots,Ta_n).
\]
Given an $n$-tuple $\bm{a} =(a_1,\ldots,a_n)$ in $A^n$, let us define
\begin{align*}
D_A(\bm{a}) = \overline{\{T\bm a: T\in \Mix(A)\}}^{\|\cdot\|},
\end{align*}
which we call the Dixmier set generated by $\bm a$ relative to $A$.

The  next two lemmas are essentially obtained in \cite{archbold}, but we establish them here in the form that will be needed later on.

\begin{lemma}[Cf. {\cite[Proposition 2.4]{archbold}}]
Each  operator in $\Mix(A)$ is a limit in the point-norm topology of a net of unitary mixing operators whose unitaries are exponentials, i.e., operators of the form
\begin{equation}\label{expos}
\sum^n_{j=1} t_j\mathrm{Ad}_{e^{ih_j}},
\end{equation}
where $h_j\in A$ is selfadjoint for all $j$.
\end{lemma}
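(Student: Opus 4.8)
The plan is to reduce to approximating a single inner automorphism $\mathrm{Ad}_u$, $u\in U(A)$, since a convex combination $\sum_j t_j\mathrm{Ad}_{u_j}$ is handled by combining the approximations of its terms. Fix $u\in U(A)$; being unitary it is normal, so I would work through the continuous functional calculus in $C^*(u)\cong C(\sigma(u))$, with $\sigma(u)$ a closed subset of the unit circle $\{z\in\C:|z|=1\}$. If $\sigma(u)$ omits some point of the circle, a continuous branch of $\tfrac1i\log$ exists on $\sigma(u)$ and yields a selfadjoint $h$ with $u=e^{ih}$, so $\mathrm{Ad}_u$ is already of the desired form. The entire difficulty is the case $\sigma(u)=\{z\in\C:|z|=1\}$, where no global continuous logarithm of $u$ is available.

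To treat this case I would combine branch cuts with an averaging. For $\omega$ on the unit circle let $L_\omega$ be the branch of $\tfrac1i\log$ that is continuous off $\omega$, and let $L_\omega^\delta$ be a continuous real function agreeing with $L_\omega$ outside the arc of angular width $\delta$ centred at $\omega$ and interpolating across the jump there. Then $h_\omega^\delta:=L_\omega^\delta(u)$ is a genuine selfadjoint element of $A$, so each $\mathrm{Ad}_{e^{ih_\omega^\delta}}$ is an exponential inner automorphism. Writing $f_\omega^\delta(z)=e^{iL_\omega^\delta(z)}$ we have $f_\omega^\delta(z)=z$ off the $\delta$-arc around $\omega$, so $g_\omega:=f_\omega^\delta-\mathrm{id}$ is supported on that arc with $\|g_\omega\|_\infty\le 2$. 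Choosing $N$ equally spaced cut points $\omega_1,\dots,\omega_N$ and setting
\[
S_{N,\delta}=\frac1N\sum_{l=1}^N \mathrm{Ad}_{e^{ih_{\omega_l}^\delta}}
\]
produces a genuine exponential unitary mixing operator in $\Mix(A)$.

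It then remains to estimate $\|S_{N,\delta}x-\mathrm{Ad}_u x\|$ for $x\in A$. With $e_l:=g_{\omega_l}(u)$ we have $f_{\omega_l}^\delta(u)=u+e_l$, whence
\[
\mathrm{Ad}_{e^{ih_{\omega_l}^\delta}}x-\mathrm{Ad}_u x=e_l x u^*+u x e_l^*+e_l x e_l^*.
\]
After averaging, the two linear terms are controlled by $\|\frac1N\sum_l e_l\|=\|\frac1N\sum_l g_{\omega_l}\|_\infty$, which is bounded by the maximal overlap of the $\delta$-arcs divided by $N$, of order $\delta$. The quadratic term is the crux: it does not factor, but applying the standard inequality $\|\sum_l a_l x a_l^*\|\le \|x\|\,\|\sum_l a_l a_l^*\|$ with $a_l=e_l/\sqrt N$ gives the bound $\frac1N\|x\|\,\|\sum_l |g_{\omega_l}|^2(u)\|$, again of order $\delta\|x\|$ once the overlap of the arcs is taken into account. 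Hence $\|S_{N,\delta}x-\mathrm{Ad}_u x\|\le C\delta\|x\|$ with $C$ absolute (taking $N\ge \pi/\delta$), and letting $\delta\to0$ yields the required point-norm approximation. The main obstacle is precisely this last step: since for $\sigma(u)$ the full circle each $g_\omega(u)$ has order-one norm, convergence can come only from the cancellation produced by spreading the branch cuts, and the non-commuting quadratic term forces the use of the completely-positive-type estimate above together with careful bookkeeping of how many arcs cover a given spectral point.
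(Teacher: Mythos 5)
Your argument is correct, and it is genuinely different from the one in the paper. The paper's proof goes through the bidual: it writes $u=e^{ih}$ with $h$ selfadjoint in $A^{**}$ via Borel functional calculus, pulls $h$ back to a bounded net of selfadjoint elements of $A$ by Kaplansky density, and then uses a Hahn--Banach argument to pass from weak* convergence of $u_\lambda \bm{x}u_\lambda^*$ to norm approximation by convex combinations; this yields exactly the point-norm statement and nothing more, and the net is indexed by finite subsets of $A$ together with $\epsilon$. Your proof stays entirely inside $A$ (indeed inside $C^*(u,1)$): the smoothed branch cuts $L_{\omega_l}^\delta$ produce honest selfadjoint logarithms, the identity $\mathrm{Ad}_{v_l}x-\mathrm{Ad}_ux=e_lxu^*+uxe_l^*+e_lxe_l^*$ with $e_l=g_{\omega_l}(u)$ is exact, the linear terms are controlled by $\bigl\|\tfrac1N\sum_l g_{\omega_l}\bigr\|_\infty=O(\delta)$ once $N\gtrsim 1/\delta$, and the row-contraction estimate $\|\sum_l a_lxa_l^*\|\leq\|x\|\,\|\sum_l a_la_l^*\|$ (valid since $\sum_l a_lxa_l^*=R(x\oplus\cdots\oplus x)R^*$ with $R$ the row $(a_1,\ldots,a_N)$ and $\|R\|^2=\|\sum_l a_la_l^*\|$) disposes of the quadratic term. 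What your route buys is strictly more than the lemma asks for: the bound $\|S_{N,\delta}x-\mathrm{Ad}_ux\|\leq C\delta\|x\|$ is uniform over the unit ball, so you get convergence in the (completely bounded) operator norm by a sequence, with explicit rates, rather than point-norm convergence of a net; the convex-combination reduction then becomes trivial since $\|\sum_j t_jS_j-\sum_j t_j\mathrm{Ad}_{u_j}\|\leq\max_j\|S_j-\mathrm{Ad}_{u_j}\|$. The cost is the combinatorial bookkeeping of arc overlaps, which you have handled correctly; the paper's soft argument avoids this at the price of invoking $A^{**}$ and Kaplansky's theorem, which it needs elsewhere anyway.
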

\begin{proof}
It suffices to approximate $\mathrm{Ad}_u$, with $u\in U(A)$,   in the point-norm topology by unitary mixing operators of the form \eqref{expos}.
Let $\{x_1,\ldots,x_n\}\subseteq A$. By the Borel functional calculus in $A^{**}$,  there exists a selfadjoint $h\in A^{**}$ such that $u=e^{ih}$. By Kaplansky's density theorem, there exists a bounded net $(h_\lambda)_\lambda$ of selfadjoint elements in $A$ such that $h_\lambda\to h$ in the ultrastrong* topology. Set $u_\lambda=e^{ih\lambda}$. Then 
$u_\lambda x_i u_\lambda^*\to u x_i u^*$ in the ultrastrong* topology, and thus also in the $\sigma(A^{**},A^*)$ topology for all $i$. Thus, $u_\lambda \bm x u_\lambda^*\to u\bm x u^*$  in the $\sigma((A^{n})^{**},(A^n)^{*})$-topology, where we have set  $\bm x=(x_1,\ldots,x_n)$. It follows,  by a standard application of the Hahn-Banach Theorem, that $u\bm x u^*$ belongs to the norm closure of $\mathrm{co}\{u_\lambda \bm x u_\lambda^*:\lambda\}$. This yields, for each $\epsilon>0$, an  operator $T\in \Mix(A)$ of the form \eqref{expos}   and  such that $\|T(\bm x)-u\bm x u^*\|<\epsilon$, as desired.
\end{proof}		

Recall that $A$ is a said to have the Dixmier property if $D_A(a)\cap Z(A)\neq \varnothing$ for all $a\in A$, where $Z(A)$ denotes the center of $A$. 

\begin{lemma}\label{centermapsdensely}
Let $A$ be a C*-algebra with the Dixmier property. Let $I$ be a closed two-sided ideal of $A$.  Let $\bm a\in A^n$. Then  $D_A(\bm a)\cap Z(A)^n$ is mapped densely onto $D_{A/I}(\pi(\bm a))\cap Z(A/I)^n$ by the quotient map $\pi\colon A\mapsto A/I$. 
\end{lemma}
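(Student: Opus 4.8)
The plan is to unpack the word ``densely'' into two assertions: that $\pi$ carries $D_A(\bm a)\cap Z(A)^n$ into $D_{A/I}(\pi(\bm a))\cap Z(A/I)^n$, and that this image is norm-dense in the target. The first is the routine half. Every $T=\sum_j t_j\mathrm{Ad}_{u_j}\in\Mix(A)$ descends through $\pi$ to $\pi_* T:=\sum_j t_j\mathrm{Ad}_{\pi(u_j)}\in\Mix(A/I)$, and one checks $\pi\circ T=(\pi_* T)\circ\pi$ on tuples; since $\pi$ is norm-continuous this gives $\pi(D_A(\bm a))\subseteq D_{A/I}(\pi(\bm a))$. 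As $\pi$ is a surjective $*$-homomorphism we also have $\pi(Z(A))\subseteq Z(A/I)$, so the claimed inclusion follows and the lemma reduces to the density statement.

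For density, fix $\bm z\in D_{A/I}(\pi(\bm a))\cap Z(A/I)^n$ and $\epsilon>0$; I must produce $\bm w\in D_A(\bm a)\cap Z(A)^n$ with $\|\pi(\bm w)-\bm z\|\le\epsilon$. First I would lift an approximating averaging operator. Choose $\tilde S\in\Mix(A/I)$ with $\|\tilde S(\pi(\bm a))-\bm z\|$ small, and apply the preceding lemma to replace $\tilde S$, on the finite tuple $\pi(\bm a)$, by an operator of the form $\sum_l t_l\mathrm{Ad}_{e^{i\tilde h_l}}$ with $\tilde h_l\in (A/I)_{\sa}$. Since $\pi$ maps $A_{\sa}$ onto $(A/I)_{\sa}$, each $\tilde h_l$ lifts to a selfadjoint $h_l\in A$, and then $S:=\sum_l t_l\mathrm{Ad}_{e^{ih_l}}\in\Mix(A)$ satisfies $\pi_* S=\sum_l t_l\mathrm{Ad}_{e^{i\tilde h_l}}$, because $\pi(e^{ih_l})=e^{i\tilde h_l}$ by continuity of $\pi$. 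Setting $\bm b:=S\bm a\in D_A(\bm a)$ then yields $\|\pi(\bm b)-\bm z\|<\epsilon$, so the remaining task is to center $\bm b$ inside $A$ without disturbing its image by more than $\epsilon$.

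This last step is where the Dixmier hypothesis and the real work enter. I would first bootstrap the single-element Dixmier property to an \emph{approximate} simultaneous centering of tuples: by induction on $n$, using that each $\mathrm{Ad}_u$, and hence each $T\in\Mix(A)$, is norm-contractive and fixes $Z(A)$ pointwise (so a coordinate already lying in $Z(A)$ is untouched by further averaging), one shows that for every $\delta>0$ there exist $T\in\Mix(A)$ and $\bm c\in Z(A)^n$ with $\|T\bm b-\bm c\|<\delta$. Iterating this with $\delta=2^{-k}$ and peeling off the central part at each stage produces operators $T_k\cdots T_1\in\Mix(A)$ whose images form a Cauchy sequence converging to some $\bm w\in D_A(\bm b)\cap Z(A)^n\subseteq D_A(\bm a)\cap Z(A)^n$. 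Applying $\pi_*$ at each stage and using that $\bm z$ is fixed by every operator in $\Mix(A/I)$, together with contractivity, gives $\|\pi_*(T_k\cdots T_1)\,\pi(\bm b)-\bm z\|\le\|\pi(\bm b)-\bm z\|<\epsilon$ for all $k$, whence $\|\pi(\bm w)-\bm z\|\le\epsilon$, as required. The main obstacle I anticipate is precisely the passage from approximate to exact simultaneous centering: one must ensure the inductively built averaging operators \emph{converge} in norm rather than merely approximate, which is what the telescoping estimate and the freezing of already-centered coordinates deliver.
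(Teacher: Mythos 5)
Your proof is correct and follows essentially the same route as the paper's: approximate by an averaging operator with exponential unitaries (via the preceding lemma), lift it through $\pi$, and then center the resulting tuple by successive averagings, using contractivity and the fact that mixing operators fix central elements to control the image under $\pi$. The only difference is that the paper cites \cite[Lemma 8.3.4]{kad-ring} for the successive-averaging/centering step, whereas you sketch its proof, and you also spell out the routine containment half that the paper leaves implicit.
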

\begin{proof}
We follow arguments from  \cite{archbold} adapted to $n$-tuples.
	
Let $\bm {\tilde z}\in D_{A/I}(\pi(\bm a))\cap Z(A/I)^n$. Let $\epsilon>0$. Then $\|\tilde T \bm a-\bm{\tilde z}\|<\epsilon$
for some  $\tilde T\in \Mix(A/I)$. Moreover, by the previous lemma, we may choose $\tilde T$ of the form \eqref{expos}.
Since the unitaries in $\tilde T$ are exponentials, they lift to unitaries in $A$. In this way  we get $T\in \Mix(A)$	such that $\pi T = \tilde T$, where $\pi\colon A\to A/I$ denotes the quotient map. Set $\bm b=T(\bm a)$. Then $\|\pi(\bm b)-\bm z\|<\epsilon$. By a process of successive averagings by unitary mixing operators, we can find $T_n\in \Mix(A)$ such that $T_n\bm b\to \bm z\in D_A(\bm b)\cap Z(A)^n$. This is \cite[Lemma 8.3.4]{kad-ring}, stated for the von Neumann algebra case, but the proof applies without change to any C*-algebra with the Dixmier property. Then $\bm z\in D_A(\bm a)$ and $\|\pi(\bm z)-\bm{\tilde z}\|<\epsilon$, as desired.
\end{proof}

\begin{lemma}\label{centrallyconvex}
Let $A$ be a von Neumann algebra. Then  $D_A(\bm a)$ has the following central convexity property: $z\bm b+(1-z)\bm c\in D_A(\bm a)$ for all $\bm b,\bm c\in D_A(\bm a)$ and  $0\leq z\leq 1$ in $Z(A)$.
\end{lemma}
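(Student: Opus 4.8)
The plan is to first establish the statement when $z=p$ is a central projection, and then bootstrap to an arbitrary $0\leq z\leq1$. Throughout I would use that $D_A(\bm a)$ is norm-closed, convex (since $tT+(1-t)S\in\Mix(A)$ for scalar $t\in[0,1]$), and invariant under $\Mix(A)$ (since $TT'\in\Mix(A)$). So fix a central projection $p$ and $\bm b,\bm c\in D_A(\bm a)$, and choose sequences $T_n=\sum_i s_i\mathrm{Ad}_{u_i}$ and $S_n=\sum_k r_k\mathrm{Ad}_{v_k}$ in $\Mix(A)$ with $T_n\bm a\to\bm b$ and $S_n\bm a\to\bm c$. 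The idea is to manufacture a single mixing operator that acts like $T_n$ on the corner $pA$ and like $S_n$ on the complementary corner $(1-p)A$.

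Concretely, for each pair $(i,k)$ I would set $w_{ik}=pu_i+(1-p)v_k$, which is a unitary of $A$ because $pu_i$ and $(1-p)v_k$ are unitaries of the complementary corners $pA$ and $(1-p)A$, and then put $R_n=\sum_{i,k}s_ir_k\,\mathrm{Ad}_{w_{ik}}\in\Mix(A)$. The key computation is that $pR_n(\bm a)=pT_n(\bm a)$ and $(1-p)R_n(\bm a)=(1-p)S_n(\bm a)$. Indeed, using $pw_{ik}=pu_i$ and $w_{ik}^*=pu_i^*+(1-p)v_k^*$, one finds $p\,\mathrm{Ad}_{w_{ik}}(\bm a)=pu_i\bm a(pu_i^*+(1-p)v_k^*)$, and the cross term $pu_i\bm a(1-p)v_k^*$ vanishes because $p$ is central, so $p\bm a(1-p)=\bm a p(1-p)=0$; what remains is $p\,\mathrm{Ad}_{u_i}(\bm a)$, independent of $k$, and summing over $k$ gives $pT_n(\bm a)$. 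Hence $R_n(\bm a)=pR_n(\bm a)+(1-p)R_n(\bm a)=pT_n(\bm a)+(1-p)S_n(\bm a)\to p\bm b+(1-p)\bm c$, so that $p\bm b+(1-p)\bm c\in D_A(\bm a)$. I expect this step to be the crux: the whole construction works only because the off-diagonal terms cancel, and that cancellation is exactly where the centrality of $p$ is used.

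It remains to pass from central projections to an arbitrary $0\leq z\leq1$ in $Z(A)$. First I would extend to finite sums $\sum_{i=1}^m p_i\bm d_i$, where the $p_i$ are orthogonal central projections with $\sum_i p_i=1$ and $\bm d_i\in D_A(\bm a)$: this follows by induction on $m$ from the two-corner case, grouping $p_1+p_2$ and feeding the induction hypothesis an element of $D_A(\bm a)$ that agrees with $\bm d_2$ on both corners $p_1A$ and $p_2A$. Given then $z=\sum_i\lambda_i p_i$ with $\lambda_i\in[0,1]$, ordinary convexity gives $\bm d_i:=\lambda_i\bm b+(1-\lambda_i)\bm c\in D_A(\bm a)$, and since $1-z=\sum_i(1-\lambda_i)p_i$ we obtain $z\bm b+(1-z)\bm c=\sum_i p_i\bm d_i\in D_A(\bm a)$. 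Finally, for general $z$ I would invoke the spectral theorem in the von Neumann algebra $Z(A)$ to approximate $z$ in norm by such central step functions $z_k$ with $0\leq z_k\leq1$; then $z_k\bm b+(1-z_k)\bm c\to z\bm b+(1-z)\bm c$ in norm, and the norm-closedness of $D_A(\bm a)$ finishes the proof. This last reduction is routine, and it is the only place where the von Neumann hypothesis is genuinely needed, namely to guarantee that the spectral projections of $z$ lie in $Z(A)$.
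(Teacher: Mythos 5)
Your proof is correct and follows essentially the same route as the paper's: approximate $z$ in norm by central step functions $\sum_k t_k e_k$ via Borel/spectral calculus, decompose $A\cong\bigoplus_k e_kA$, choose the mixing operators independently on each corner, and finish with ordinary scalar convexity and the norm-closedness of $D_A(\bm a)$. The only difference is that you spell out the gluing construction $w_{ik}=pu_i+(1-p)v_k$ with product weights, which the paper leaves implicit in the assertion that unitary mixing operators on $\bigoplus_k e_kA$ are exactly the direct sums $\bigoplus_k T_k$ with $T_k\in\Mix(e_kA)$.
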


Note: This lemma is true in any C*-algebra, but we only prove here the von Neumann algebra case, as it is all that will be needed later on.  
\begin{proof} For $n=1$ and $a$ selfadjoint, this is \cite[Lemma 3.4]{NRS}. The same proof holds in this case with the obvious modifications.
	
Let $0\leq z\leq 1$ be a central. Using Borel functional calculus, we can write $z$ as a norm limit of elements of the form $\sum_{k=1}^N t_ke_k$, where the $(e_k)_{k=1}^N$ are central pairwise orthogonal projections adding up to 1, and $t_k\in [0,1]$ for all $k$. It thus suffices to assume that $z$ is exactly of this form. 
	
We have $\bm b\in D_A(\bm a)$ if and only  $e_k\bm b\in D_{e_kA}(e_k\bm a)$ for all $k$. This holds since
 unitary mixing operators on $A\cong \bigoplus_{k=1}^N e_kA$  are of the form $\bigoplus_{k=1}^N T_k$, with
$T_k\in \Mix(e_kA)$ for all $k$. Since $e_kz$ is a scalar multiple of $e_k$ (the unit of $e_kA$), cutting down by each $e_k$ reduces the proof to the case that $z$ is a scalar. In this case, the result is simply the convexity of $D_A(\bm a)$.
\end{proof}

In the next section we make essential use of the Zsido's Approximation Lemma  from \cite{zsido}:
\begin{lemma}\label{zsidolemma}
Let $A$ be a simple C*-algebra without bounded traces. Let $\rho\in \St(A)$. Then the operator $A\ni a\mapsto \rho(a)1\in A$ is in the point-norm closure of $\Mix(A)$. 	
\end{lemma}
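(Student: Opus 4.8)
The plan is to prove the equivalent assertion that $(\rho(a_1)1,\dots,\rho(a_n)1)\in D_A(\bm a)$ for every $\bm a=(a_1,\dots,a_n)\in A^n$, since saying that the operator $a\mapsto\rho(a)1$ lies in the point-norm closure of $\Mix(A)$ is exactly saying that it can be approximated by some $T\in\Mix(A)$ on every finite set. Write $\mathcal C$ for the point-norm closure of $\Mix(A)$ and $S_\rho$ for the operator $a\mapsto\rho(a)1$. My first move is to reduce to the case that $\rho$ is pure. Indeed, $\mathcal C$ is convex, being the closure of the convex set $\Mix(A)$ in the locally convex topology generated by the seminorms $T\mapsto\|Ta\|$, and it is point-norm closed. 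Moreover $\rho\mapsto S_\rho$ is affine and carries weak*-convergence of states to point-norm convergence of the $S_\rho$, because $\|S_{\rho'}(a)-S_\rho(a)\|=|\rho'(a)-\rho(a)|$. Since every state is a weak*-limit of finite convex combinations of pure states (Krein--Milman), it follows that $S_\sigma\in\mathcal C$ for all pure $\sigma$ forces $S_\rho\in\mathcal C$ for all states $\rho$. So I assume $\rho$ is pure.

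Fix $a_1,\dots,a_n$ and $\epsilon>0$. Replacing each $a_j$ by $a_j-\rho(a_j)1$ and using that every $T\in\Mix(A)$ fixes scalars, I reduce to finding $T\in\Mix(A)$ with $\|Ta_j\|<\epsilon$ for all $j$, now under the normalization $\rho(a_j)=0$. Because $\rho$ is pure, the Akemann--Anderson--Pedersen excision theorem yields a positive norm-one $e\in A$ with $\|ea_je\|=\|ea_je-\rho(a_j)e^2\|<\delta$ for every $j$, with $\delta$ at my disposal. This concentrates the entire obstruction to averaging inside the corner cut out by $e$.

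The decisive step is to turn this excision into a genuine unitary average. The device I would use is a pinch-and-shift: given (approximate) isometries $s_1,\dots,s_N$ with pairwise orthogonal ranges summing to $1$, set $\zeta=e^{2\pi i/N}$ and $z=\sum_{l=1}^N\zeta^l s_ls_l^*$; then $E:=\frac1N\sum_{k=0}^{N-1}\mathrm{Ad}_{z^k}$ lies in $\Mix(A)$ and satisfies $E(a)=\sum_l s_ls_l^*\,a\,s_ls_l^*$, while averaging by the cyclic shift $v=\sum_l s_{l+1}s_l^*$ symmetrizes the resulting diagonal blocks. Composing the two (a composition of elements of $\Mix(A)$ is again in $\Mix(A)$) produces $T\in\Mix(A)$ with $\|Ta_j\|=\big\|\frac1N\sum_l s_l^*a_js_l\big\|$. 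If the $s_l$ are arranged to be supported in the corner of $e$, each compression $s_l^*a_js_l$ is governed by the excision bound $\|ea_je\|<\delta$, and the average is $<\epsilon$ for all $j$ simultaneously.

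I expect producing such a spreading family to be the \emph{main obstacle}, and this is exactly where both hypotheses enter. A bounded trace $\tau$ would give $N\tau(1)=\sum_l\tau(s_ls_l^*)\le\tau(1)$, forcing $N=1$; the absence of bounded traces removes this ceiling, while simplicity makes the corner of $e$ full, so that copies of the unit can be inserted into it. The genuine difficulty is that a simple traceless $A$ need not be properly infinite, so honest orthogonal isometries may be unavailable; the real argument must run with approximate, averaged tilings and control the approximations uniformly over $a_1,\dots,a_n$. This is the technical heart of the lemma, and for it I would follow Zsido's construction in \cite{zsido}. That simplicity cannot be dropped is clear from $B(H)$, where a rank-one projection $p$ cannot be averaged up to $\rho(p)1=1$.
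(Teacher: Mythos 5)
The paper does not actually prove this lemma: it is imported verbatim from Zsid\'o's paper, so ``follow Zsid\'o's construction'' is, in the end, exactly what the authors do. Your opening reductions are sound: the point-norm closure of $\Mix(A)$ is a closed convex set, $\rho\mapsto(a\mapsto\rho(a)1)$ is affine and carries weak* convergence to point-norm convergence, so pure states suffice; and excision of a pure state is legitimate. The problem is the central ``pinch-and-shift'' step, which as written is internally inconsistent. For $z=\sum_l\zeta^l s_ls_l^*$ and the cyclic shift $v=\sum_l s_{l+1}s_l^*$ to be unitaries --- which is what puts the pinching $E$ and the shift average in $\Mix(A)$ and gives $\|\sum_m s_m b s_m^*\|=\|b\|$ --- you need the range projections $s_ls_l^*$ to be pairwise orthogonal and to sum to $1$. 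But for \emph{every} compression $s_l^*a_js_l$ to be controlled by $\|ea_je\|$ you need every $s_l$ to satisfy $es_l\approx s_l$, i.e.\ every range projection to lie in the hereditary subalgebra $\overline{eAe}$. These two demands are incompatible: if all the $s_ls_l^*$ lie in $\overline{eAe}$ and sum to $1$, then $1\in\overline{eAe}$, so $e$ is invertible, and then the excision inequality $\|ea_je\|<\delta$ would force $\|a_j\|\leq\delta\|e^{-1}\|^2$ --- excision never produces such an $e$. A telltale sign that the bookkeeping is off is that in your final estimate the factor $1/N$ does no work (each compression is already $<\delta$), whereas in every argument of this kind the large $N$ is essential.

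The repair is to leave one block outside the corner: arrange $s_2,\ldots,s_N$ with ranges (approximately) in $\overline{eAe}$ and put $s_1s_1^*=1-\sum_{l\geq 2}s_ls_l^*$; then
$\bigl\|\tfrac1N\sum_l s_l^*a_js_l\bigr\|\leq\tfrac1N\|a_j\|+\tfrac{N-1}{N}\delta$,
and this is where $N\to\infty$ and both hypotheses genuinely enter: the absence of bounded traces is what permits squeezing $N-1$ (approximate, Cuntz-comparison) copies of the unit into the arbitrarily small hereditary subalgebra $\overline{eAe}$, and simplicity makes $e$ full so that this is even conceivable. Carrying this out without honest projections or isometries is the real content of the lemma, and you have (honestly) deferred exactly that to Zsid\'o. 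So as a self-contained argument the proposal has a genuine gap; as an account of why the lemma is true it has the right ingredients but a fatal misconfiguration in the one step you did spell out.
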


\section{Proofs of Theorem \ref{zeroaverage} and Corollary \ref{Hmaps}}
Let us introduce some notation: We denote by $\T(A)$ the set of tracial states of $A$. Given an ideal $I\subseteq A$, we denote by  $\St(A)_I$ the states on $A$ that vanish on $I$, i.e., that factor through $A/I$.  Given a state $\rho\in \St(A)$ and $\bm a\in A^n$ we evaluate $\rho$ on $\bm a$ coordinatewise:
\[
\rho(\bm a)=(\rho(a_1),\ldots,\rho(a_n))\in \C^n.\] 
We regard $\C^n$ endowed with the maximum norm. In this way, $A^n\ni \bm a\mapsto \rho(\bm a)\in \C^n$ has norm $1$.

We shall deduce the results stated in the introduction from the following result, of independent interest:

\begin{theorem}\label{0inDas}
Let $\bm a_1,\ldots,\bm a_m\in A^n$. Let $r\geq 0$. The following are equivalent:
\begin{enumerate}[(i)]
\item
$\inf \{\|\sum_{i=1}^m T_i(\bm a_i)\|: T_i\in \Mix(A)\}\leq r$.
\item
The following conditions hold: 
\begin{enumerate}[(a)]
\item
$\|\sum_{i=1}^m \tau(\bm a_i)\|\leq r$ for all $\tau\in \T(A)$,
\item
for each ideal $M\in \Max(A)$ there exist states $\rho_1,\ldots,\rho_m\in \St(A)_M$ such that 
$\|\sum_{i=1}^m \rho_i(\bm a_i)\|\leq r$.
\end{enumerate}
\end{enumerate}
\end{theorem}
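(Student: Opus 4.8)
The plan is to prove the two implications separately, treating $(i)\Rightarrow(ii)$ by a direct invariance argument and $(i\!i)\Rightarrow(i)$ by Hahn--Banach duality followed by a construction of central witnesses inside the enveloping von Neumann algebra.

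First I would dispose of $(i)\Rightarrow(ii)$. For $(a)$, every $\tau\in\T(A)$ is invariant under inner automorphisms, so $\tau(T\bm a)=\tau(\bm a)$ for all $T\in\Mix(A)$; since $\bm a\mapsto\tau(\bm a)$ has norm $1$, condition $(i)$ gives $\|\sum_i\tau(\bm a_i)\|=\|\tau(\sum_iT_i\bm a_i)\|\le\|\sum_iT_i\bm a_i\|$, and choosing $T_i$ near-optimal yields $(a)$. For $(b)$, fix $M\in\Max(A)$, pass to the simple quotient $A/M$ with quotient map $\pi$, and let $\overline T_i\in\Mix(A/M)$ be the induced operators. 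Evaluating an arbitrary state $\sigma$ of $A/M$ gives $\sigma(\overline T_i\pi(\bm a_i))=\rho_i(\pi(\bm a_i))$, where $\rho_i:=\sigma\circ\overline T_i$ is again a state (a convex combination of the states $\sigma\circ\mathrm{Ad}_{\overline u}$). Hence $\|\sum_i\rho_i(\pi(\bm a_i))\|=\|\sigma(\pi(\sum_iT_i\bm a_i))\|\le r+\epsilon$, and a weak\nobreak*-compactness argument as $\epsilon\to0$ produces states attaining the bound $r$; pulling them back along $\pi$ gives the required $\rho_i\circ\pi\in\St(A)_M$.

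For the substantive implication $(i\!i)\Rightarrow(i)$ I would argue by contradiction through duality. Since $\Mix(A)$ is convex, so is $C:=\{\sum_{i=1}^mT_i(\bm a_i):T_i\in\Mix(A)\}\subseteq A^n$. If $(i)$ failed, then $\operatorname{dist}(0,\overline C)>r$, so every element of $C-B_r$ (with $B_r$ the closed ball of radius $r$) has norm bounded away from $0$, and Hahn--Banach furnishes $\phi=(\phi_1,\dots,\phi_n)\in(A^n)^*$ with $\inf_{y\in C}\operatorname{Re}\phi(y)>r\|\phi\|$, using $\sup_{x\in B_r}\operatorname{Re}\phi(x)=r\|\phi\|$. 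Splitting the infimum over the independent $T_i$ and passing from $\Mix(A)$ to its extreme points, the left-hand side equals $\sum_{i=1}^m\inf_{\bm b\in D_A(\bm a_i)}\operatorname{Re}\phi(\bm b)$. Thus it suffices, for this fixed $\phi$, to exhibit tuples $\bm z_i$ lying in $\overline{D_A(\bm a_i)}$ (in a suitable topology) with $\|\sum_i\bm z_i\|\le r$, for then $\sum_i\operatorname{Re}\phi(\bm z_i)=\operatorname{Re}\phi(\sum_i\bm z_i)\le r\|\phi\|$, contradicting the strict inequality. To build them I would move into $M:=A^{**}$: as $\phi$ is normal and, by the exponential lemma together with Kaplansky density, the weak\nobreak*-closure of $D_A(\bm a_i)$ coincides with $D_M(\bm a_i)$, it is enough to find central tuples $\bm z_i\in D_M(\bm a_i)\cap Z(M)^n$ with $\|\sum_i\bm z_i\|\le r$. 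On the finite (tracial) central summand I would take $\bm z_i$ to be the center-valued trace of $\bm a_i$, whose fibrewise values are the numbers $\tau(\bm a_i)$, so that $(a)$ bounds $\|\sum_i\bm z_i\|$ by $r$ there; on the traceless part I would use $(b)$, realizing the states $\rho_i$ it supplies as point-norm limits of averages via Zsido's Approximation Lemma (Lemma \ref{zsidolemma}), which places the scalar tuples $\rho_i(\bm a_i)1$ in the relevant Dixmier sets with $\|\sum_i\rho_i(\bm a_i)\|\le r$. These local choices I would glue into a single $\bm z_i\in D_M(\bm a_i)\cap Z(M)^n$ using the central convexity of Dixmier sets (Lemma \ref{centrallyconvex}) and the lifting of central elements across quotients (Lemma \ref{centermapsdensely}).

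The hard part will be this gluing step together with the matching of the data of $(a)$--$(b)$, which lives on the traces and \emph{maximal} ideals of $A$, to the central decomposition of $M=A^{**}$. The delicate point is that characters of $Z(M)$ correspond to factor representations of $A$, whose kernels are primitive but in general not maximal, whereas $(b)$ controls only maximal ideals; so I must argue that the central witnesses can be chosen so that the uniform bound $r$ is preserved fibrewise over the spectrum of $Z(M)$, assembling the center-valued trace on the finite part and the Zsido averagings on the traceless part coherently. Verifying that the glued tuple genuinely belongs to $D_M(\bm a_i)$, and not merely to its closed convex hull, is precisely where Lemmas \ref{centrallyconvex} and \ref{centermapsdensely} must do the essential work.
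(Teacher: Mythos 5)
Your outline follows the same route as the paper: reduce to $A^{**}$ by a Hahn--Banach/Kaplansky argument, split the von Neumann algebra into its finite and properly infinite summands, handle the finite part with the center-valued trace and condition (a), handle the traceless quotients with Zsid\'o's lemma and condition (b), and glue the resulting central witnesses. The duality framing at the start is a slightly roundabout but workable substitute for the paper's Lemma \ref{bidualreduction} (though your claim that the weak*-closure of $D_A(\bm a_i)$ \emph{coincides} with $D_{A^{**}}(\bm a_i)$ is stronger than what you need or what is true; only the inclusion $D_{A^{**}}(\bm a_i)\subseteq \overline{D_A(\bm a_i)}^{w*}$ is required, and that is what Kaplansky density gives).

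However, the two steps you flag as ``the hard part'' are genuinely the content of the theorem, and you leave both unproved. First, you must transfer hypothesis (b), which speaks of maximal ideals of $A$, to maximal ideals of $A^{**}$. The paper does this by noting that for a maximal ideal $M$ of the overalgebra, $A\cap M$ sits inside some maximal ideal of $A$, so (b) supplies states of $A$ vanishing on $A\cap M$, and these extend by Hahn--Banach to states of $A^{**}/M$. Your worry about primitive versus maximal ideals is misplaced: inside a von Neumann algebra, weak centrality identifies $\Max$ with the spectrum of the center, so characters of $Z(A^{**})$ do correspond to maximal ideals; the real issue is the passage from $\Max(A)$ to $\Max(A^{**})$, which you do not address. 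Second, the gluing itself requires an actual mechanism: the paper uses that for each maximal ideal $M$ one gets central tuples $\bm z_i$ with $\|\sum_i\pi_M(\bm z_i)\|<r+\epsilon$, that this estimate persists on an open neighborhood of $M$ in $\widehat{Z(A^{**})}$ by continuity of the Gelfand transforms, and that total disconnectedness of that spectrum lets one refine the cover to a \emph{finite} clopen partition, producing finitely many central projections along which Lemma \ref{centrallyconvex} can assemble a single $\bm z_i\in D(\bm a_i)\cap Z^n$. Without this compactness/partition argument, central convexity alone cannot combine the fibrewise choices (which a priori vary discontinuously from fibre to fibre) into an element of the Dixmier set, so as written the proposal has a genuine gap at its decisive step.
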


\begin{proof}[Proof of (i) $\Rightarrow$ (ii)]
Let $\tau\in \T(A)$. Since $\tau\circ T=\tau$	for all $T\in \Mix(A)$, it follows that
\[
\sum_{i=1}^m \tau(\bm a_i) = \tau(\sum_{i=1}^n T_i(\bm a_i))
\]
for all $T_i\in \Mix(A)$. Since $\tau$ is a state, it is clear that (i) implies 
that the left hand side of the equation above has norm at most $r$. (Recall that 
we've endowed $\C^n$ with the maximum norm.) This proves (a).

Let $M\in \Max(A)$ and $\rho\in \St(A)_M$. Then $\rho \circ T\in \St(A)_M$ for all $T\in \Mix(A)$. 
Also, 
\[
\sum_{i=1}^m (\rho\circ T_i)(\bm a_i) = \rho(\sum_{i=1}^n T_i(\bm a_i)).
\]
Passing to the infimum over all $T_1,\ldots,T_n\in \Mix(A)$ and using (i), we deduce that 
\[
\inf \{\|\sum_{i=1}^m \rho_i(\bm a_i)\|:\rho_i\in \St(A)_M\}\leq r.
\] 
By the compactness of $\St(A)_M$ in the weak* topology, this infimum is attained. This proves
(b).
\end{proof}

Before proving  (ii) $\Rightarrow$ (i) of Theorem \ref{0inDas}, we use a standard Hahn-Banach/Kaplansky density argument to reduce the proof to the  von Neumann algebra case.  
\begin{lemma}\label{bidualreduction}
	Let $\bm a_1,\ldots\bm a_m\in A^n$. Regard $A$ as a C*-subalgebra of its bidual $A^{**}$. Then
	\[
	\inf \{\|\sum_{i=1}^m T_i(\bm a_i)\|: T_i\in \Mix(A)\}=\inf \{\|\sum_{i=1}^m T_i(\bm a_i)\|: T_i\in \Mix(A^{**})\}.
	\]
\end{lemma}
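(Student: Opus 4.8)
The plan is to realize both infima as distances from the origin to convex sets and to compare their Hahn--Banach supporting functionals. Set
\[
K=\Big\{\textstyle\sum_{i=1}^m T_i(\bm a_i):T_i\in\Mix(A)\Big\}\subseteq A^n,\qquad \widetilde K=\Big\{\textstyle\sum_{i=1}^m T_i(\bm a_i):T_i\in\Mix(A^{**})\Big\}\subseteq (A^{**})^n.
\]
Both are convex, and the two quantities in the statement are $\mathrm{dist}(0,K)$ and $\mathrm{dist}(0,\widetilde K)$, computed in $A^n$ and $(A^{**})^n$ respectively; these norms agree on $A^n$ because $A\hookrightarrow A^{**}$ isometrically. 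A mixing operator in $\Mix(A)$ is the restriction to $A$ of the mixing operator in $\Mix(A^{**})$ assembled from the same unitaries, so $K\subseteq\widetilde K$ and hence $\mathrm{dist}(0,\widetilde K)\le\mathrm{dist}(0,K)$ for free. All the content lies in the reverse inequality.

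For that I would use the duality formula $\mathrm{dist}(0,C)=\sup\{\inf_{x\in C}\mathrm{Re}\,\psi(x):\|\psi\|\le1\}$, valid for any convex set $C$ in a Banach space, with $\psi$ ranging over the dual. The key structural remark is that $(A^{**})^n$ is the dual of the Banach space $(A^n)^*=(A^*)^n$, so each $\psi\in (A^n)^*$ is a weak*-continuous (i.e.\ normal) functional on $(A^{**})^n$ of the same norm, and the functionals governing $\mathrm{dist}(0,K)$ are exactly these normal functionals, a subset of those governing $\mathrm{dist}(0,\widetilde K)$. I would therefore reduce the claim to the per-functional identity
\[
\inf_{x\in K}\mathrm{Re}\,\psi(x)=\inf_{y\in\widetilde K}\mathrm{Re}\,\psi(y)\qquad(\psi\in (A^n)^*).
\]
Granting this, $\mathrm{dist}(0,K)=\sup_{\psi}\inf_{K}\mathrm{Re}\,\psi=\sup_{\psi}\inf_{\widetilde K}\mathrm{Re}\,\psi\le\mathrm{dist}(0,\widetilde K)$, the last step because the normal functionals form a subset of the full dual of $(A^{**})^n$; combined with the trivial inequality this gives equality.

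In the per-functional identity the inequality $\inf_{\widetilde K}\mathrm{Re}\,\psi\le\inf_K\mathrm{Re}\,\psi$ is immediate from $K\subseteq\widetilde K$, and the reverse inequality is the Kaplansky step, carried out exactly as in the first lemma of the previous section: given $y=\sum_i T_i(\bm a_i)\in\widetilde K$ with $T_i=\sum_j t_{ij}\mathrm{Ad}_{u_{ij}}$ and $u_{ij}=e^{ih_{ij}}$, $h_{ij}\in (A^{**})_{\sa}$, I would pick bounded nets $(h^\lambda_{ij})$ in $A_{\sa}$ converging to $h_{ij}$ in the ultrastrong* topology along a common directed set, form $v^\lambda_{ij}=e^{ih^\lambda_{ij}}\in U(A)$, and set $x_\lambda=\sum_i\sum_j t_{ij}\,v^\lambda_{ij}\bm a_i (v^\lambda_{ij})^{*}\in K$. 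Then $x_\lambda\to y$ ultrastrongly*, hence in the $\sigma((A^{**})^n,(A^n)^*)$-topology, and normality of $\psi$ gives $\mathrm{Re}\,\psi(x_\lambda)\to\mathrm{Re}\,\psi(y)$, so $\inf_K\mathrm{Re}\,\psi\le\mathrm{Re}\,\psi(y)$; taking the infimum over $y\in\widetilde K$ finishes the identity.

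The one delicate point, and the one I would flag as the main obstacle, is the asymmetry between the two distance formulas: $\mathrm{dist}(0,\widetilde K)$ a priori involves all of $((A^{**})^n)^*$, which is strictly larger than the predual $(A^n)^*$ controlling $\mathrm{dist}(0,K)$. What rescues the argument is that the additional singular functionals can only enlarge $\mathrm{dist}(0,\widetilde K)$, which is precisely the direction we need; the genuine analytic work is the Kaplansky approximation of bidual unitaries by unitaries of $A$ tested against a fixed normal functional, everything else being Hahn--Banach bookkeeping.
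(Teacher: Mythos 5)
Your proof is correct and rests on the same two ingredients as the paper's: Kaplansky-type approximation of bidual unitaries (via exponentials of selfadjoint lifts) tested against normal functionals, plus a Hahn--Banach argument exploiting convexity of the sets $K$ and $\widetilde K$. The only difference is organizational: you package the Hahn--Banach step as the duality formula $\mathrm{dist}(0,C)=\sup_{\|\psi\|\leq 1}\inf_C\mathrm{Re}\,\psi$ and compare per-functional infima, whereas the paper fixes a near-minimizer $\bm b\in\widetilde K$ and directly separates the convex hull of its approximants in $A^n$ from the open ball of radius $r+\epsilon$ --- these are dual formulations of the same argument.
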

\begin{proof}
	Clearly, the right side is dominated by the left side. Let $r$  denote the number on the left side. Let $\epsilon>0$.  Suppose that $\bm b=\sum_{i=1}^m \bm b_i$ is such that $\|\bm b\|<r+\epsilon$, with
	\[
	\bm b_i=\sum_{k=1}^{n_i} t_{i,k}u_{i,k}\bm a_i u_{i,k}^*,
	\] 
	where the sum is a convex combination, and where $u_{i,k}\in U(A^{**})$  for all $i,k$. By Kaplansky's density theorem for unitaries (\cite[Theorem 2]{GlimmKad}), there exist (commonly indexed) nets of unitaries  $(u_{i,k,\lambda})_{\lambda}\in U(A)$ such that  $u_{i,k,\lambda}\to u_{i,k}$ in the ultrastrong* topology for $i=1,\ldots,m$. For each $i=1,\ldots,m$ and $\lambda$, set
	\[
	\bm b_{i,\lambda} = \sum_{k=1}^{n_i} t_iu_{i,k,\lambda}\bm a_i u_{i,k,\lambda}^*\in D_A(\bm a_i).
	\]
	Then $\bm b_{i,\lambda}\to \bm b_i$ coordinatewise in the weak* topology $\sigma(A^{**},A^*)$, equivalently, in the $\sigma((A^n)^{**},(A^n)^*)$ topology. Consider the set
	\[
	S=\mathrm{co}\{\sum_{k=1}^m\bm b_{i,\lambda}:\lambda\}.
	\]
	This is a convex subset of $A^n$ whose $\sigma((A^n)^{**},(A^n)^*)$ closure in $(A^{**})^n$ contains $\bm b$. It follows that $S$ must intersect the ball $\{\bm x\in A^n:\|\bm x\|<r+\epsilon\}$. For suppose that this is not the case. Then, by the Hahn-Banach theorem, there exists $\bm{\rho}\in (A^n)^*$ such that $\mathrm{Re}(\bm{\rho},\bm{x})<r+\epsilon$ for all $\|\bm{x}\|<r+\epsilon$, 
	while $\mathrm{Re}(\bm{\rho},\bm y)>r+\epsilon$ for all  $\bm{y}\in S$. The first inequality  implies that $\|\bm \rho\|\leq 1$ and the second one that $\mathrm{Re}(\bm{\rho},\bm b)\geq r+\epsilon$. This contradicts that $\|\bm b\|<r+\epsilon$. Thus, there exists a convex combination of sums of the form $\sum_{k=1}^m\bm b_{i,\lambda}$
	with norm $<r+\epsilon$. This yields an element of norm $<r+\epsilon$ and of the desired form. 
\end{proof}

\begin{proof}[Proof of Theorem \ref{0inDas} (ii) $\Rightarrow$ (i)]
	
The proof proceeds in stages: We first obtain the result in the case that $A$ is a simple C*-algebra without  bounded traces. Next,  we deal with the case that  $A$ is a von Neumann algebra, which we break-up into the finite and the properly infinite case. Finally, relying on the previous lemma, we deal with the general case.

\emph{$A$ is simple and traceless}. Let $\bm a_1,\ldots,\bm a_m\in A^n$. Let $\epsilon>0$. By assumption, there exist 	$\rho_1,\ldots,\rho_m\in \St(A)$
such that 
\[
\|\sum_{i=1}^m \rho_i(\bm a_i) \|\leq r.
\]
By Lemma \ref{zsidolemma}, for each $i$ the map $A\ni a\mapsto \rho_i(a)1\in A$ is a point-norm limit of unitary mixing operators. Then, letting $T_{i,\lambda}\in \Mix(A)$ be such that $\lim_\lambda T_{i,\lambda} = \rho_i 1$,
we deduce at once that 
\[
\|\sum_{i=1}^m T_{i,\lambda}(\bm a_i)\|< r+\epsilon,
\]
for some $\lambda$. This, (ii) implies (i) in this case.

\emph{$A$ is a properly infinite von Neumann algebra}. Let $\bm a_1,\ldots,\bm a_m\in A^n$ be $n$-tuples satisfying condition (b) of Theorem \ref{0inDas} for a given $r\geq 0$.  Let $\epsilon>0$. Let $M$ be a maximal ideal of $A$. Passing to the quotient $A/M$, which is simple and traceless, we know, as established in the previous paragraph, that 
\[
\Big(\sum_{i=1}^m D_{A/M}(\pi_M(\bm a_i))\Big)\cap Z(A/M)^n
\] 
contains an element of norm less than $r+\epsilon$ (where $\pi_M\colon A\to A/M$ denotes the quotient map). Since $D_A(\bm a_i)\cap Z(A)^n$ maps densely onto $D_{A/M}(\pi_M(\bm a_i))\cap Z(A/M)$  (Lemma \ref{centermapsdensely}), there exist  $\bm z_i\in D_A(\bm a_i)\cap Z(A)^n$ such that 
\[
\|\sum_{i=1}^m \pi_M(\bm{z}_i)\|<r+\epsilon.
\] 
Recall that von Neumann algebras are weakly central, i.e., $\Max(A)$ is homeomorphic to the spectrum of $Z(A)$ 
via the map $\Max(A)\ni M\mapsto M\cap Z(A)\in \widehat{Z(A)}$. Identifying in this way $\Max(A)$ with 
$\widehat{Z(A)}$, we can rephrase the inequality above as 
\[
\|\sum_{i=1}^m\widehat{\bm{z}}_i(M)\|<r+\epsilon,
\] 
where $\widehat{z}\colon \widehat{Z(A)}\to \C$ denotes the Gelfand transform of $z\in Z(A)$. This inequality is valid in neighborhood $\mathcal U_M$ of $M$:
\[
\|\sum_{i=1}^m\widehat{\bm{z}}_i(M')\|<r+\epsilon,
\] 
for all $M'\in \mathcal U_M$. Since the spectrum of $Z(A)$ is totally disconnected, we can refine the cover $(\mathcal U_M)_M$ to a  finite partition of $\widehat{Z(A)}$ by clopen sets. We  thus obtain central projections $(e_k)_{k=1}^N$, corresponding to these clopen sets, such that $\sum_{k=1}^N e_k =1$, and for each $k=1,\ldots,N$ there exists $\bm z_{i,k}\in  D_A(\bm a_i)\cap Z(A)^n$ such that 
\begin{equation}\label{gluecentrals}
\|e_k\sum_{i=1}^m\bm z_{i,k}\|<r+\epsilon.
\end{equation}
Now let
\[
\bm z_i=e_1\bm z_{i,1}+\ldots+e_N \bm z_{i,N}\hbox{ for }i=1,\ldots,m.
\]
By  Lemma \ref{centrallyconvex}, $\bm z_i\in D_A(\bm a_i)\cap Z(A)^n$. Further, 
\[
\|\sum_{i=1}^m \bm z_i\|=\max_{1\leq k\leq N} \|e_k\sum_{i=1}^m\bm z_{i,k}\|<r+\epsilon.
\] 
This proves that (ii) implies (i) in the case that $A$ is a properly infinite von Neumann algebra.

\emph{$A$ is a finite von Neumann algebra}. Suppose that $A$ is a finite von Neumann algebra. This case follows using standard results on the center-valued trace.
Let $E\colon A\to Z(A)$ denote the center-valued trace. Let $\bm a_1,\ldots,\bm a_m\in A^n$. Suppose that $\bm a_1,\ldots,\bm a_m$ satisfy condition (a) of Theorem \ref{0inDas} for a given $r\geq 0$. For each point evaluation
$\mathrm{ev}_M\colon Z(A)\to \C$, with $M\in \widehat{Z(A)}$, the map $\mathrm{ev}_M\circ E$ is a trace on $A$. We deduce at once from condition (a)
that
\[
\|\sum_{i=1}^m E(\bm a_i)\|\leq r.
\]
It thus suffices to show that $E$ is a point-norm limit of unitary mixing operators.
Indeed, by \cite[Lemma 8.3.4]{kad-ring}, there exists a net $(T_\lambda)_\lambda\in \Mix(A)$ such that $T_\lambda a\in D_A(a)\cap Z(A)$ for all $a\in A$. Moreover, since $A$ is a finite von Neumann algebra, it has the singleton Dixmier property, i.e., $D_A(a)\cap Z(A)=\{E(a)\}$ for all $a\in A$.

\emph{$A$ is an arbitrary von Neumann algebra}. Suppose now that $A$ is an arbitrary von Neumann algebra. 
Then there exists a central projection $e$ such that $eA$ is a finite von Neumann algebra, while  $(1-e)A$ is properly infinite. Let $\bm{a}_1,\ldots,\bm a_m\in A^n$ be $n$-tuples satisfying (a) and (b) of Theorem \ref{0inDas}. Then $e\bm{a_1},\ldots,e\bm a_m$ satisfy (a) relative to $eA$, and as demostrated above this implies that $\|\sum_{i=1}^m T_i\bm a_i\|<r+\epsilon$ for some 
$T_1,\ldots T_m\in \Mix(eA)$. On the other hand, $(1-e)\bm{a_1},\ldots,(1-e)\bm{a_m}$ satisfy condition (b) on $(1-e)A$, and as demonstrated above, $\|\sum_{i=1}^m T_i'(\bm a_i)\|<r+\epsilon$ form some  $T_i'\in \Mix((1-e)A)$. Since the operators  $a\mapsto T_i(ea)+T_i'((1-e)a)$ are unitary mixing operator on $A$ for all $i$, we again deduce that (ii) implies (i) for an arbitrary von Neumann algebra.  

\emph{Case of an arbitrary C*-algebra}. 
Let us argue that if  $A$ is unitally embedded in some C*-algebra $B$, then conditions  (a) and (b) are verified relative to $B$ as well. Since every bounded trace on $B$ restricts to a trace on $A$,  we immediately deduce that (a)
holds in $B$.  Let $M\subseteq B$ be a maximal ideal of $B$. By condition (b) applied in $A$, there exist states 
$\rho_1,\ldots,\rho_m\in \St(A)$ that vanish on $A\cap M$ and such that 
\[
\|\sum_{i=1}^m \rho_i(\bm a_i)\|\leq r.
\]
By the Hahn-Banach theorem, the  state induced by $\rho_i$ on $A/(A\cap M)$ extends to a state on $B/M$. We thus obtain
states  $\tilde \rho_1,\ldots,\tilde\rho_m\in \St(B)_M$ extending $\rho_1,\ldots,\rho_m$. This yields condition (b)
relative to $B$. Let us now regard $A$ as a C*-subalgebra of $A^{**}$. Since the latter is a von Neumann algebra,
we already know that (ii) implies (i) in this case. Applying Lemma \ref{bidualreduction}, we deduce (i). 
\end{proof}

Let us now prove the results stated in the introduction:

\begin{proof}[Proof of Theorem \ref{zeroaverage}]
Let us prove that (i) implies (iii). Let $F=\{a_1,\ldots,a_n\}$ be a finite subset of $V$, and let $\epsilon>0$.
Set $\bm a=(a_1,\ldots,a_n)$. Then $\bm a$ satisfies conditions (a) and (b) of Theorem \ref{0inDas} (ii), with $m=1$ and $r=0$. We conclude that  $\bm 0\in D_A(\bm a)$. Thus, there exists $T_{F,\epsilon}\in \Mix(A)$ such that $\|T_{F,\epsilon}a_i\|<\epsilon$ for all $i$. The net of operators $(F,\epsilon)\mapsto T_{F,\epsilon}$ is then as desired.

It is clear that (iii) implies (ii). 

Let us prove that (ii) implies (i). Let $V$ be as in (ii). Observe first that  $a-Ta\in [A,A]$ for any $T\in \Mix(A)$. Hence,  if $T_na\to 0$ for some sequence $T_n\in \Mix(A)$, then $a\in \overline{[A,A]}$. This shows that  $V\subseteq \overline{[A,A]}$. 

Let $M$ be a maximal ideal. Let $F=\{a_1,\ldots,a_n\}$ be a finite subset of $V$. Suppose, for the sake of contradiction, that  $\bm 0\notin \{\rho(\bm a):\rho\in \St(A)_M\}$. Then, by the Hahn-Banach Theorem applied to the compact convex set $\{\rho(\bm a):\rho\in \St(A)_M\}$  and $\bm 0\in \C^n$, there exist 
$\alpha_1,\ldots,\alpha_n\in \C^n$  and $c>0$ such that 
\[
\mathrm{Re}(\sum_{i=1}^n \alpha_i \beta_i)\geq c\hbox{ for all }(\beta_1,\ldots,\beta_n)\in \{\rho(\bm a):\rho\in \St(A)_M\}.
\]
Let $a=\sum_{i=1}^n \alpha_i a_i$.
Then, $0\notin \{\rho(a):\rho\in \St(A)_M\}$, which in turn implies that $0\notin D_A(a)$, by the equivalence
of (i) and (iii) for the case $n=1$ (\cite[Theorem 4.7]{NRS}). This contradicts (ii). We have shown that for every finite set
$F\subseteq V$ the set $\{\rho\in \St(A)_M:\rho(F)=\{0\}\}$ is non-empty. By the compactness of $\St(A)_M$ in the weak* topology, we obtain $\rho\in \St(A)_M$ such that $\rho(V)=\{0\}$, as desired.
\end{proof}	

\begin{remark}
If every quotient of  $A$ has a bounded trace, then Theorem \ref{zeroaverage} (i) simply asserts that $V\subseteq \overline{[A,A]}$. In this case, it is not difficult to go from (i)$\Leftrightarrow$(ii) to (iii) by a process of successive averagings towards zero of a given collection of elements: starting with $a_1,a_2,\ldots, a_n$ in $\overline{[A,A]}$, we argue by the equivalence of (i) and (ii) that there exists $T_1\in \Mix(A)$ such that $\|T_1a_1\|<1/2$. Since $T_1a_2\in \overline{[A,A]}$,   we can choose $T_2\in \Mix(A)$ such that $\|T_2T_1a\|<\frac1{2^2}$, etc.  This simple strategy breaks down when the C*-algebra $A$ has traceless quotients, since after the first step there is no guarantee that $0\in D_A(T_1a_2)$.
\end{remark}	

\begin{proof}[Proof of Corollary \ref{Hmaps}]
Let $V=\{a-H(a):a\in A\}$. By the assumption that $H$ preserves traces, $\tau|_V=0$ for any bounded trace $\tau$.
Since $\overline{[A,A]}=\bigcap_{\tau\in \T(A)} \ker \tau$ (see \cite{CuPed}), it follows that $V\subseteq \overline{[A,A]}$.

Let $M$ be a maximal ideal and denote by $\pi_M\colon A\to A/M$ the quotient map. Since $A/M$ is simple and unital, its center is isomorphic to $\C$. We may thus regard the restriction of $\pi_M$ to $Z(A)$ as a homomorphism onto $\C$. Set  $\rho_M=\pi_M\circ H$. Then $\rho_M$ is a state factoring through $A/M$ and
such that $\rho_M(a-H(a))=0$ for all $a\in A$, i.e., $V\subseteq \ker \rho_M$. It follows by Theorem \ref{zeroaverage}
that there exists a net of unitary mixing operators $(T_\lambda)_\lambda$ such that $T_{\lambda}(a-H(a))\to 0$
for all $a\in A$. Since the $T_\lambda$s fix the center, we obtain that $T_\lambda a\to Ha$ for all $a\in A$, as desired.
\end{proof}

\begin{bibdiv}
\begin{biblist}
\bib{archbold}{article}{
	author={Archbold, R. J.},
	title={An averaging process for $C\sp*$-algebras related to weighted
		shifts},
	journal={Proc. London Math. Soc. (3)},
	volume={35},
	date={1977},
	number={3},
	pages={541--554},
	issn={0024-6115},
}

\bib{ART}{article}{
	author={Archbold, Robert},
	author={Robert, Leonel},
	author={Tikuisis, Aaron},
	title={The Dixmier property and tracial states for $C^*$-algebras},
	journal={J. Funct. Anal.},
	volume={273},
	date={2017},
	number={8},
	pages={2655--2718},
	issn={0022-1236},
	review={\MR{3684893}},
	doi={10.1016/j.jfa.2017.06.026},
}

\bib{CuPed}{article}{
	author={Cuntz, Joachim},
	author={Pedersen, Gert Kjaerg\.{a}rd},
	title={Equivalence and traces on $C\sp{\ast} $-algebras},
	journal={J. Functional Analysis},
	volume={33},
	date={1979},
	number={2},
	pages={135--164},
}

\bib{GlimmKad}{article}{
	author={Glimm, James G.},
	author={Kadison, Richard V.},
	title={Unitary operators in $C\sp{\ast} $-algebras},
	journal={Pacific J. Math.},
	volume={10},
	date={1960},
	pages={547--556},
	issn={0030-8730},
	review={\MR{115104}},
}
\bib{halpern2}{article}{
	author={Halpern, Herbert},
	title={A generalized dual for a $C\sp*$-algebra},
	journal={Trans. Amer. Math. Soc.},
	volume={153},
	date={1971},
	pages={139--156},
}

\bib{kad-ring}{book}{
	author={Kadison, Richard V.},
	author={Ringrose, John R.},
	title={Fundamentals of the theory of operator algebras. Vol. II},
	series={Graduate Studies in Mathematics},
	volume={16},
	note={Advanced theory;
		Corrected reprint of the 1986 original},
	publisher={American Mathematical Society, Providence, RI},
	date={1997},
	pages={i--xxii and 399--1074},
	isbn={0-8218-0820-6},
}

\bib{magajna}{article}{
	author={Magajna, Bojan},
	title={On weakly central $C^*$-algebras},
	journal={J. Math. Anal. Appl.},
	volume={342},
	date={2008},
	number={2},
	pages={1481--1484},
}

\bib{NRS}{article}{
	author={Ng, Ping Wong},
	author={Robert, Leonel},
	author={Skoufranis, Paul},
	title={Majorization in $\rm C^*$-algebras},
	journal={Trans. Amer. Math. Soc.},
	volume={370},
	date={2018},
	number={8},
	pages={5725--5759},
}

\bib{zsido}{article}{
	author={Zsid\'{o}, L\'{a}szl\'{o}},
	title={A criterion for splitting $C^*$-algebras in tensor products},
	journal={Proc. Amer. Math. Soc.},
	volume={128},
	date={2000},
	number={7},
	pages={2001--2006},
}
	
\end{biblist}
\end{bibdiv}

\end{document}